\documentclass[10pt]{amsart}
  
% packages                     
\usepackage[T1]{fontenc}
\usepackage[small,it]{caption}
\usepackage[english]{babel}
\usepackage{url}
\usepackage{fullpage}
\usepackage{graphicx}                         
\usepackage{amsmath}
\usepackage{amsthm}                      
\usepackage{amssymb} 
\usepackage{mathrsfs}
\usepackage{stmaryrd}
\usepackage{enumerate}
\usepackage{tikz}

\parindent=10pt 
\parskip=0pt  
\usepackage{tikz-cd}

\usepackage[margin=3cm]{geometry}
\usepackage{euscript}
\renewcommand{\mathcal}{\EuScript}
% theorems
\swapnumbers
\theoremstyle{plain}
\makeatletter
\def\swappedhead#1#2#3{%
	\thmnumber{\@upn{\the\thm@headfont#2\@ifnotempty{#1}{.~}}}%
	\thmname{#1}%
	\thmnote{ {\the\thm@notefont(#3)}}}
\makeatother

\usepackage{color}
\usepackage{hyperref}
\usepackage[noabbrev,capitalize]{cleveref}
\hypersetup{
	colorlinks,
	linkcolor={red!50!black},
	citecolor={blue!50!black},
	urlcolor={blue!80!black}
}

\DeclareMathSymbol{A}{\mathalpha}{operators}{`A}
\DeclareMathSymbol{B}{\mathalpha}{operators}{`B}
\DeclareMathSymbol{C}{\mathalpha}{operators}{`C}
\DeclareMathSymbol{D}{\mathalpha}{operators}{`D}
\DeclareMathSymbol{E}{\mathalpha}{operators}{`E}
\DeclareMathSymbol{F}{\mathalpha}{operators}{`F}
\DeclareMathSymbol{G}{\mathalpha}{operators}{`G}
\DeclareMathSymbol{H}{\mathalpha}{operators}{`H}
\DeclareMathSymbol{I}{\mathalpha}{operators}{`I}
\DeclareMathSymbol{J}{\mathalpha}{operators}{`J}
\DeclareMathSymbol{K}{\mathalpha}{operators}{`K}
\DeclareMathSymbol{L}{\mathalpha}{operators}{`L}
\DeclareMathSymbol{M}{\mathalpha}{operators}{`M}
\DeclareMathSymbol{N}{\mathalpha}{operators}{`N}
\DeclareMathSymbol{O}{\mathalpha}{operators}{`O}
\DeclareMathSymbol{P}{\mathalpha}{operators}{`P}
\DeclareMathSymbol{Q}{\mathalpha}{operators}{`Q}
\DeclareMathSymbol{R}{\mathalpha}{operators}{`R}
\DeclareMathSymbol{S}{\mathalpha}{operators}{`S}
\DeclareMathSymbol{T}{\mathalpha}{operators}{`T}
\DeclareMathSymbol{U}{\mathalpha}{operators}{`U}
\DeclareMathSymbol{V}{\mathalpha}{operators}{`V}
\DeclareMathSymbol{W}{\mathalpha}{operators}{`W}
\DeclareMathSymbol{X}{\mathalpha}{operators}{`X}
\DeclareMathSymbol{Y}{\mathalpha}{operators}{`Y}
\DeclareMathSymbol{Z}{\mathalpha}{operators}{`Z}

\usepackage{palatino}
\usepackage{mathtools}
\newtheorem{thm}{Theorem}[section]

\newtheorem{prop}[thm]{Proposition}

\theoremstyle{definition}

\newtheorem{rem}[thm]{Remark}

\newtheorem{defn}[thm]{Definition}

\newtheorem{para}[thm]{}

\usepackage[backend=bibtex,maxnames=7,maxalphanames=7,style=ext-alphabetic,articlein=false,doi=false,isbn=false,url=false,eprint=false]{biblatex}
\bibliography{../database.bib}
\AtBeginBibliography{\small}

\newcommand{\open}{\mathrm{Open}}
\newcommand{\op}{\mathrm{op}}
\newcommand{\Z}{\mathbf Z}
\newcommand{\sing}{\mathrm{sing}}
\newcommand{\sheaf}{\mathrm{sheaf}}
\newcommand{\Sing}{\mathcal S}

\DeclareMathOperator*{\hocolim}{hocolim}

\DeclareMathOperator*{\holim}{holim}
\title{A remark on singular cohomology and sheaf cohomology}

\author{Dan Petersen}
\email{dan.petersen@math.su.se}
\thanks{The author acknowledges support by ERC-2017-STG 759082 and a Wallenberg Academy Fellowship. }
\begin{document}

\begin{abstract}We prove a comparison isomorphism between singular cohomology and sheaf cohomology. 
\end{abstract}
	\maketitle

\section{Introduction}

\begin{para}Let $X$ be a topological space, $A$ an abelian group. We say that $X$ is \emph{cohomologically locally connected}\footnote{The notion of being cohomologically locally connected is classically defined by the stronger condition that for all $k \in \Z$, all $x \in X$ and all neighborhoods $x \in U$, there exists a smaller neighborhood $x \in V \subset U$ such that $H^k_\sing(U,x;A) \to H^k_\sing(V,x;A)$ is the zero map. The definition used here seems more natural. 
} (with respect to $A$) if for all $x \in X$ and all $k \in \mathbf Z$, 
$$ \varinjlim_{U \ni x}H^k_\sing(U,x;A)=0.$$ 
The goal of this note is to give a homotopy-theoretic proof of the following result:
\end{para}
                                      
\begin{thm}Suppose that the topological space $X$ is cohomologically locally connected with respect to $A$. Then sheaf cohomology and singular cohomology of $X$ with coefficients in $A$ are canonically isomorphic. 
\end{thm}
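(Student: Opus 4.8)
The plan is to realize, for each $n\ge 0$, both $H^n_\sheaf(X;A)$ and $H^n_\sing(X;A)$ as invariants of a single sheaf on $X$, and then to pin that sheaf down using the hypothesis. Let $\mathrm{Sing}$ denote the singular complex functor and fix Eilenberg--MacLane spaces $K(A,n)$. On the poset $\open(X)$, consider the presheaf of spaces
\[
\mathcal{Q}_n\colon U\longmapsto \mathrm{Map}\bigl(\mathrm{Sing}(U),K(A,n)\bigr),
\]
whose homotopy groups are $\pi_j\mathcal{Q}_n(U)=H^{n-j}_\sing(U;A)$; in particular every value $\mathcal{Q}_n(U)$ is $n$-truncated, and $\pi_0\mathcal{Q}_n(X)=H^n_\sing(X;A)$. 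The maps $\mathrm{Sing}(U)\to *$ assemble to a map from the constant presheaf with value $K(A,n)$ into $\mathcal{Q}_n$.

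The first step is to show that $\mathcal{Q}_n$ is a sheaf. This is where topology enters: by the homotopical Mayer--Vietoris (= homotopy Seifert--van Kampen) theorem, the singular complex functor carries any open cover of an open set $U$ to a colimit diagram, that is, $\mathrm{Sing}(U)$ is the homotopy colimit of $\mathrm{Sing}$ applied to the \v{C}ech nerve of the cover (the classical barycentric-subdivision argument, using compactness of singular simplices). Applying $\mathrm{Map}(-,K(A,n))$ turns this colimit into the limit that is the descent condition for $\mathcal{Q}_n$; hence $\mathcal{Q}_n$ is a sheaf, and, being $n$-truncated, it is automatically hypercomplete --- so hypercovers never intervene. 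Consequently the map above factors canonically through a morphism of sheaves $K(\underline A,n)\to\mathcal{Q}_n$, where $K(\underline A,n)\in\mathrm{Shv}(X)$ is the Eilenberg--MacLane object of the constant sheaf $\underline A$ of abelian groups, equivalently the sheafification of the constant presheaf $K(A,n)$. By the standard description of sheaf cohomology via Eilenberg--MacLane objects, $\pi_0\Gamma(X;K(\underline A,n))=H^n_\sheaf(X;A)$.

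The second step is to prove that $K(\underline A,n)\to\mathcal{Q}_n$ is an equivalence. Since both sides are $n$-truncated, it is enough to check this on stalks. The stalk of $K(\underline A,n)$ at a point $x$ is $K(A,n)$; the stalk of $\mathcal{Q}_n$ at $x$ is $\varinjlim_{U\ni x}\mathcal{Q}_n(U)$, with homotopy groups $\pi_j=\varinjlim_{U\ni x}H^{n-j}_\sing(U;A)$. Unwinding the long exact sequences of the pairs $(U,x)$ shows that the hypothesis --- vanishing of $\varinjlim_{U\ni x}H^k_\sing(U,x;A)$ for every $k$ --- is equivalent to: $\varinjlim_{U\ni x}H^k_\sing(U;A)=0$ for all $k\ge 1$, and the natural map $A\to\varinjlim_{U\ni x}H^0_\sing(U;A)$ is an isomorphism. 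This says precisely that the stalk of $\mathcal{Q}_n$ at $x$ is $n$-truncated with its homotopy concentrated in degree $n$, equal there to $A$, and that the map coming from $K(A,n)$ realizes this identification. So the morphism is a stalkwise equivalence, hence an equivalence.

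Finally, applying $\Gamma(X;-)$ and taking $\pi_0$ gives
\[
H^n_\sheaf(X;A)=\pi_0\Gamma(X;K(\underline A,n))\;\cong\;\pi_0\Gamma(X;\mathcal{Q}_n)=\pi_0\mathcal{Q}_n(X)=H^n_\sing(X;A),
\]
naturally in $X$; letting $n$ range over $\Z_{\ge 0}$ assembles these into the asserted canonical isomorphism. I expect the real work to be in the first step --- the sheaf property of $\mathcal{Q}_n$, i.e.\ the homotopical Mayer--Vietoris property of $\mathrm{Sing}$, and in particular its validity for arbitrary (not merely finite) open covers, which is what promotes the classical excision/Mayer--Vietoris theorem to full descent. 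Everything else is formal: that $n$-truncated sheaves are detected by their stalks (so hypercompleteness never arises outside the truncated world), together with the elementary bookkeeping that matches the hypothesis to the required statement about stalks.
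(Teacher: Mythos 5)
Your argument is correct, and its engine is the same as the paper's: the only genuinely topological input is that $\mathrm{Sing}(U)$ is the homotopy colimit of $\mathrm{Sing}$ applied to the \v{C}ech nerve of any open cover (the \v{C}ech case of the Dugger--Isaksen theorem the paper cites, proved by barycentric subdivision), and representability of cohomology then converts this into descent, after which the hypothesis is exactly the statement that the stalks are correct. What you do differently is the packaging: you work one degree at a time with the $n$-truncated sheaves of spaces $\mathcal{Q}_n = \mathrm{Map}(\mathrm{Sing}(-),K(A,n))$ and compare them to Eilenberg--MacLane objects $K(\underline A,n)$, whereas the paper works once and for all with the presheaf of cochain complexes $U \mapsto C^\bullet_\sing(U,A)$, exhibited as a fibrant replacement of the constant presheaf in the local projective model structure. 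Your route buys real simplifications: truncated sheaves are automatically hypercomplete and equivalences between them are detected on stalks by elementary Postnikov induction, so hypercovers, hypercompletion and the local model structure never need to be set up. The costs are that you must separately invoke the representability of sheaf cohomology by Eilenberg--MacLane objects, $\pi_0\Gamma(X;K(\underline A,n))\cong H^n_\sheaf(X;A)$, as the bridge to the classical definition (the counterpart of the paper's proposition that bounded below complexes of flabby sheaves are hypersheaves), and that you obtain only a graded family of isomorphisms rather than a single quasi-isomorphism of complexes, so the $E_\infty$-refinement of the paper's final section is invisible from your vantage point. Two small points worth making explicit: $\pi_j\mathcal{Q}_n(U)$ is independent of basepoint because $K(A,n)$ is an infinite loop space, so each $\mathcal{Q}_n(U)$ is grouplike; and the factorization of $\mathrm{const}\,K(A,n)\to\mathcal{Q}_n$ through $K(\underline A,n)$ uses that the constant-sheaf functor is left exact and hence carries $K(A,n)$ to the Eilenberg--MacLane object of $\underline A$.
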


\begin{para}\label{intro paragraph}It is of course well known that singular cohomology coincides with sheaf cohomology for `reasonable' spaces. I will offer two excuses for having written this paper:
\begin{enumerate}[(i)]
\item The proof given here seems to me to be the `right' one. Although the argument is not simple in the sense of being elementary or self-contained, it is simple in that it only uses general principles and makes the result seem inevitable; there is no cleverness involved.
\item In the classical literature one finds a proof of this result under the assumption that $X$ is cohomologically locally connected and \emph{paracompact Hausdorff}. It is a surprisingly recent theorem of Sella\footnote{Sella makes the stronger assumption that $X$ is \emph{semi-locally contractible}, but his proof only requires $X$ to be cohomologically locally connected.} \cite{sella} that the result is true also without the assumption that $X$ is paracompact Hausdorff. However, Sella's argument is quite intricate, and reading it makes one wonder what is `really' going on. 
\end{enumerate} 
We also give a quick argument for the more refined statement that if $A$ is a commutative ring then there is a canonical isomorphism of $E_\infty$-algebras $\mathrm R\Gamma(X,A) \simeq C^\bullet_\sing(X,A)$ in the $\infty$-category of cochain complexes.\end{para}

%\begin{para}In the classical literature one finds a proof of this result under the assumption that $X$ is cohomologically locally connected and \emph{paracompact}. The fact that paracompactness can be omitted is a surprisingly recent theorem of Sella\footnote{Instead of assuming $X$ cohomologically locally connected, Sella makes the stronger assumption that $X$ is \emph{semi-locally contractible}. However, his proof does not appear to require this stronger assumption.} \cite{sella}. However, Sella's argument is quite intricate, whereas the argument here involves only standard tools of homotopy theory, and it seems to me to be the `right' argument, in that it makes the result seem easy and inevitable. An advantage of the set-up here is that it becomes rather painless to upgrade the statement to the existence of a canonical isomorphism of $E_\infty$-algebras $\mathrm R\Gamma(X,R) \simeq C^\bullet_\sing(X,R)$ in the $\infty$-category of cochain complexes. 
%\end{para}

\begin{para}\label{outline}Let us first of all outline the standard proof of the {comparison} isomorphism between sheaf and singular cohomology, which can be found with some variations in many textbooks. Denote by $\Sing$ the cochain complex of presheaves on $X$ which in degree $k$ is given by $U \mapsto C_\sing^k(U,A)$, and for a presheaf $F$ denote by $\underline F$ its sheafification. The proof proceeds as follows:
\begin{enumerate}[(i)]
\item Let $A$ be the constant presheaf. Argue that the unit map $A \to \Sing$ is a quasi-isomorphism on stalks. 
\item Apply the sheafification functor to get a quasi-isomorphism $\underline A \to \underline \Sing$. 
\item Argue that the sheaves $\underline \Sing^k$ are acyclic, so that $\underline A \to \underline \Sing$ is an acyclic resolution of the constant sheaf.
\item Argue that the map $\Gamma(X,\Sing) \to \Gamma(X,\underline \Sing)$ is a quasi-isomorphism.
\end{enumerate}
\end{para}

\begin{para}\label{proofsketch}
Each of the steps (i), (iii) and (iv) of \S\ref{outline} require some point-set hypotheses on $X$. Step (i) is of course equivalent to $X$ being cohomologically locally connected. Steps (iii) and (iv) require $X$ to be paracompact Hausdorff. For step (iii) a natural idea is to observe that the presheaves $\Sing^k$ are \emph{flasque}. Unfortunately, the sheafification of a flasque presheaf is not in general flasque. But one may moreover note that $\Sing^k$ is an \emph{epipresheaf}, i.e.\ a presheaf which satisfies gluing, but not unique gluing. Now if $F$ is any epipresheaf and $X$ is paracompact Hausdorff, then one can show that the natural map $F(X) \to \underline F(X)$ is surjective \cite[Ch.\ I, Theorem 6.2]{bredon}. It follows in particular the sheafification of a flasque epipresheaf on a \emph{hereditarily paracompact} Hausdorff space is flasque. If $X$ is not hereditarily paracompact Hausdorff it may not be true that $\underline \Sing$ is flasque, and a slightly more elaborate argument is needed: namely, it will always be true that $\Sing^0 = \underline \Sing^0$ is a flasque sheaf of rings, and $\underline \Sing^k$ is a module over $\underline \Sing^0$. On a paracompact Hausdorff space one knows that flasque sheaves are soft, modules over soft sheaves of rings are fine, and fine sheaves are acyclic (see \cite[Ch. II, \S 9]{bredon}; hence again $\underline \Sing$ is an acyclic resolution.  For (iv), one may identify the image of $\Gamma(X,\Sing) \to \Gamma(X,\underline \Sing)$ with the filtered colimit over all open coverings $\mathfrak U$ of $X$ of the complexes of `$\mathfrak U$-small cochains'. Since $X$ is paracompact Hausdorff, $\Gamma(X,\Sing) \to \Gamma(X,\underline \Sing)$ is onto, and then the result follows from the `theorem of small simplices' \cite[Ch.\ 4, Sect.\ 4, Theorem 14]{spanier}.
\end{para}

\begin{para}As mentioned in \S\ref{intro paragraph}, Sella proved a comparison isomorphism without assuming $X$ paracompact Hausdorff. His approach is to turn $\Sing$ into a sheaf not by means of sheafification, but instead by a very carefully constructed procedure of taking a colimit over smaller and smaller singular simplices. It turns out that the result is a resolution of the constant sheaf by products of skyscraper sheaves, whose global sections can be compared to the singular cochains on $X$. 

\end{para}\begin{para}
A homotopy theorist, presented with the proof outline of \S\ref{outline}, might proclaim that step (ii) is `morally wrong', and therefore also (iii) and (iv). Indeed one could object that \textbf{we should not have to sheafify $\Sing$, since it is already a sheaf}. As this statement is obviously false as stated --- the presheaves $\Sing^k$ are certainly not sheaves --- I should explain what it means. The point is that we may think of $\Sing$ as a presheaf taking values in the \emph{$\infty$-category} of cochain complexes (a higher categorical enhancement of the classical derived category of abelian groups). There is a well defined notion of what it means for a presheaf taking values in an $\infty$-category to be a sheaf, and $\Sing$ is always a sheaf in this higher categorical sense. In the rest of this note we will explain this statement, and how it implies the comparison isomorphism between sheaf cohomology and singular cohomology: given that $\Sing$ is a sheaf in the sense of higher category theory, we only need  step (i) of the proof outline of \S\ref{outline}, and then the proof is finished. 
\end{para}

\section{Hypersheaves and cohomology}

\begin{defn}\label{sheafdef}
Let $M$ be a model category (or a complete $\infty$-category), and $X$ a topological space. A presheaf $F \colon \open(X)^\op \to M$ is called a \emph{hypersheaf} if for any open hypercover $V_\bullet \to U$  of an open subset of $X$, the map
$$ F(U) \longrightarrow \holim F(V_\bullet) $$
is a weak equivalence.%\footnote{One can also define the weaker notion of an \emph{$\infty$-sheaf}, by imposing the descent condition only for \v{C}ech covers, rather than general hypercovers. It is an insight of Lurie \cite{highertopostheory} that $\infty$-sheaves are for several reasons more fundamental than hypersheaves; in particular, the $\infty$-category of hypersheaves can be recovered from the $\infty$-category of $\infty$-sheaves by the process of hypercompletion. We will not need to consider $\infty$-sheaves here, and for presheaves of bounded below cochain complexes the two notions are in fact equivalent. }
\end{defn}

\begin{rem}Every open cover of a topological space is in particular a hypercover, and the reader uncomfortable with hypercovers may consider only ordinary open covers throughout without losing much.\footnote{In fact one can also define the weaker notion of an \emph{$\infty$-sheaf}, by imposing the descent condition only for open covers, rather than general hypercovers. Conventionally in higher category theory these would be just called \emph{sheaves}, not $\infty$-sheaves, but this terminology seems potentially confusing here. It is an insight of Lurie \cite{highertopostheory} that $\infty$-sheaves are for several reasons more fundamental than hypersheaves; in particular, the $\infty$-category of hypersheaves can be recovered from the $\infty$-category of $\infty$-sheaves by the process of hypercompletion. For bounded below presheaves of cochain complexes the notions of $\infty$-sheaf and hypersheaf are in fact equivalent, so that in principle we could have worked with $\infty$-sheaves throughout. We have opted to work with hypersheaves since we will check quasi-isomorphisms on stalks, which in general is not valid for $\infty$-sheaves but always for hypersheaves.} To be very concrete, suppose we are given a presheaf $F$ on $X$ valued in cochain complexes. If $\mathfrak U$ is an open cover of an open subset $U$ of $X$, then we may form the usual \v{C}ech complex $\check{C}(\mathfrak{U},F)$. It is a double complex, one differential being the \v{C}ech differential and one being the internal differential coming from $F$ being a presheaf of cochain complexes. The hypersheaf axiom of \cref{sheafdef} says in this case that the natural map $F(U) \to \check{C}(\mathfrak{U},F)$ is a quasi-isomorphism. 
\end{rem}

\begin{para}\label{projective}One can define sheaf cohomology in terms of hypersheaves.  It is natural to formulate this model-categorically, in terms of the \emph{local projective model structure on presheaves of cochain complexes}, described e.g.\ in \cite{jardine,hinichsheaves,choudhurygallauer}. It is the differential graded analogue of the more familiar local model structure on \emph{simplicial} presheaves. The homotopy category of this model category is the classical unbounded derived category of sheaves of $R$-modules.\end{para}

\begin{prop}\label{model structure}Let $X$ be a topological space, $R$ a commutative ring. There is a model structure on the category of presheaves of unbounded cochain complexes of $R$-modules on $X$, such that the weak equivalences are the maps inducing quasi-isomorphisms on all stalks, and the fibrations are the pointwise surjections whose kernel is a hypersheaf. 
\end{prop}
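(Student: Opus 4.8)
The plan is to construct this model structure as a left Bousfield localization of the global projective model structure on presheaves of cochain complexes, and then to identify the local objects and local equivalences explicitly. First I would recall that the category of presheaves of unbounded cochain complexes of $R$-modules carries the \emph{global} (or \emph{pointwise}) projective model structure, in which weak equivalences and fibrations are detected pointwise on each open set; this is standard since the category of cochain complexes of $R$-modules is a cofibrantly generated model category and presheaf categories inherit the projective structure. This model category is moreover combinatorial (hence left proper and cellular), so left Bousfield localizations exist at any set of maps.

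Next I would choose the set of maps to invert. For each open $U \subseteq X$ and each open hypercover $V_\bullet \to U$, one has the \v{C}ech/hypercover object $\check{C}(V_\bullet) := \operatorname{colim}_{[n]} (\bigoplus_{i} R_{V_{n,i}})[\text{shift}]$, more precisely the geometric realization in cochain complexes of the simplicial presheaf $[n]\mapsto \bigsqcup_i R_{V_{n,i}}$, where $R_W$ denotes the presheaf $R$ extended by zero from $W$ (representing $F \mapsto F(W)$ up to the usual adjunction). There is a canonical map $\check{C}(V_\bullet) \to R_U$, and I would localize the global projective structure at the (small, once one restricts to a generating set of hypercovers) collection of these maps. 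The $S$-local objects are then precisely the \emph{objectwise-fibrant hypersheaves}: $F$ is local iff each $F(U)$ is fibrant (which is automatic, all complexes are fibrant) and $\operatorname{Map}(R_U, F) \to \operatorname{Map}(\check{C}(V_\bullet), F)$ is a weak equivalence, which unwinds exactly to the descent condition $F(U) \xrightarrow{\sim} \operatorname{holim} F(V_\bullet)$ of \cref{sheafdef}. This identifies the fibrant objects of the localization with hypersheaves.

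It then remains to identify the weak equivalences and fibrations in the form stated. For the fibrations: in any left Bousfield localization, the fibrations between fibrant objects are the fibrations of the underlying model structure, and a general map is a fibration iff it has the right lifting property against local-trivial cofibrations; unwinding, $p$ is a local fibration iff it is a pointwise (global projective) fibration — i.e.\ a pointwise surjection, since every object of $\mathrm{Ch}(R)$ is fibrant — \emph{and} the fiber/kernel is local, i.e.\ a hypersheaf. Here one uses that the global projective fibrations with a given property of their fibers are detected this way because $\mathrm{Ch}(R)$ is stable, so fiber sequences and cofiber sequences agree and the kernel computes the homotopy fiber. For the weak equivalences: I would show a map is a local equivalence iff it induces quasi-isomorphisms on all stalks. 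The stalk functor $F \mapsto F_x = \operatorname{colim}_{U \ni x} F(U)$ is a left Quillen functor from the local structure to $\mathrm{Ch}(R)$ (it sends the localizing maps $\check C(V_\bullet)\to R_U$ to quasi-isomorphisms, since filtered colimits are exact and a hypercover becomes a trivial cover on stalks), so stalkwise quasi-isomorphisms are local equivalences; conversely, a fibrant replacement of a stalkwise-acyclic object must be a globally trivial hypersheaf (a hypersheaf with vanishing stalks is pointwise acyclic, as hypersheaf cohomology is computed by stalks — this is where hypercompleteness, as opposed to merely the sheaf condition, is essential and is exactly the footnoted point in the excerpt), giving the converse.

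The main obstacle, and the step I would spend the most care on, is the converse direction of the weak-equivalence characterization: proving that a hypersheaf with zero stalks is pointwise zero in the homotopy category, i.e.\ that stalks detect weak equivalences \emph{between hypersheaves}. Abstractly this is the statement that the local projective model category is \emph{hypercomplete} by construction — we localized at all hypercovers precisely to force this — and the cleanest way to see it is to transport the analogous known fact from simplicial presheaves (where it is classical, e.g.\ Jardine, Dugger–Hollander–Isaksen), or to argue directly with a Postnikov/descent spectral sequence: filtering by truncations, the hypercover descent condition gives a conditionally convergent spectral sequence from \v{C}ech cohomology of stalk cohomology presheaves, which forces pointwise acyclicity. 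Everything else — existence of the localization, the unwinding of mapping spaces into the descent condition, and the identification of fibrations — is formal once the combinatorial/cellular hypotheses and the stability of $\mathrm{Ch}(R)$ are in place, and I would present those steps briskly with references to \cite{jardine,hinichsheaves,choudhurygallauer}.
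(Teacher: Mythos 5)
Your proposal follows essentially the same route as the paper, which likewise obtains the local projective structure as a left Bousfield localization of the global projective model structure at (a dense set of) hypercovers, using the combinatorial/left proper machinery of Smith--Barwick--Lurie for existence and citing Dugger--Hollander--Isaksen (and Choudhury--Gallauer for cochain complexes) for the nontrivial identification of the local equivalences with the stalkwise quasi-isomorphisms. The paper outsources those two steps to the references rather than sketching them, but your decomposition of the argument --- including singling out the stalkwise characterization as the genuinely hard point --- matches its proof exactly.
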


\begin{para}\label{barwick para}One way of understanding the model structure of \cref{model structure} is as follows. 
The category $\mathrm{Ch}_R$ has a standard model structure whose fibrations are degreewise surjections, and weak equivalences are quasi-isomorphisms. Then we also have the projective model structure on the category of presheaves on $X$ with values in $\mathrm{Ch}_R$ (which in this context is often called the global projective model structure), with fibrations and quasi-isomorphisms defined pointwise. The local projective model structure may then be seen as the left Bousfield localization of the global projective model structure at the class of all hypercovers. Existence of the left Bousfield localization can be seen from the general theory of combinatorial model categories, due to Smith (unpublished), for which canonical references include Barwick \cite{barwick} and Lurie \cite[Appendix A.2]{highertopostheory}. More specifically, these are left proper combinatorial model categories, and therefore admit left Bousfield localizations at arbitrary sets of maps; in particular we may localize at any dense set of hypercovers. See in particular Barwick \cite[Section 4, Application IV]{barwick}, where the analogous procedure is carried out for \v{C}ech covers instead of hypercovers, giving a model category of $\infty$-sheaves rather than hypersheaves. The fact that the equivalences in this Bousfield localization are precisely the stalkwise equivalences is not immediate, and is due to Dugger--Hollander--Isaksen \cite{duggerhollanderisaksen} in the setting of simplicial presheaves. See \cite{choudhurygallauer} for the analogous result for cochain complexes. \end{para}

\begin{para}\label{def-cohomology}We can now give a definition of sheaf cohomology. Consider the constant presheaf/global sections adjunction
$$ \mathrm{const} : \mathrm{Ch}_R \leftrightarrows \mathrm{PSh}(X,\mathrm{Ch}_R) : \Gamma(X,-) $$
between the categories of cochain complexes, and presheaves of cochain complexes on $X$. In the previous paragraph we explained that when $\mathrm{Ch}_R$ is given its standard model structure and $\mathrm{PSh}(X,\mathrm{Ch}_R)$ the local projective model structure, then this is a Quillen adjunction (in fact a composite of two Quillen adjunctions, one with respect to the global projective model structure, and one Bousfield localization). Then the general theory of model categories tells us how to define, and how to compute, a right derived functor $R\Gamma(X,F)$: first choose a weak equivalence $F \to F'$, where $F'$ is fibrant (i.e.\ a hypersheaf), and define $R\Gamma(X,F)=\Gamma(X,F')$. Up to homotopy the result is independent of the choice of fibrant replacement $F'$, according to Ken Brown's lemma. The cohomology of the complex $R\Gamma(X,F)$ is then by definition the cohomology $H^\bullet(X,F)$. 
\end{para}

\begin{para}Let us for the reader's convenience make explicit the comparison with more standard definitions of sheaf cohomology. Conventionally one would define the sheaf cohomology $H^\bullet(X,F)$ by choosing a quasi-isomorphism $F \to F'$ to a complex of flabby sheaves, and taking global sections of $F'$. To deduce that our definition of sheaf cohomology coincides with this one, we need to know that complexes of flabby sheaves are also hypersheaves.  \end{para}

\begin{prop}\label{hypersheaf0}
Let $R$ be a commutative ring, and let $K$ be a bounded below cochain complex of flabby sheaves of $R$-modules on $X$. Then $K$ is a hypersheaf. 
\end{prop}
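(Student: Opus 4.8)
The plan is to check the hypersheaf condition of \cref{sheafdef} directly, by recognizing the relevant homotopy limit as a derived mapping complex in a derived category of sheaves. Fix an open hypercover $V_\bullet \to U$ of an open subset $U \subseteq X$, and write each $V_n$ as a disjoint union $\coprod_{\alpha} V_{n,\alpha}$ of opens of $U$. For an open $W \subseteq U$, write $R[W] := j_{W!}(\underline R)$ for the extension by zero to $U$ of the constant sheaf of $R$-modules on $W$, so that $[n] \mapsto R[V_n] := \bigoplus_{\alpha} R[V_{n,\alpha}]$ is a simplicial object of the abelian category $\mathrm{Sh}_R(U)$ of sheaves of $R$-modules on $U$, augmented to $R[U]$. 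I would then invoke two facts. \emph{Fact 1:} the augmented chain complex $\cdots \to R[V_1] \to R[V_0] \to R[U] \to 0$ is exact; equivalently, the augmentation exhibits $R[U]$ as the homotopy colimit $\hocolim_{\Delta^{\op}} R[V_\bullet]$ in the derived category $D(\mathrm{Sh}_R(U))$. \emph{Fact 2:} for every bounded-below complex $L$ of flabby sheaves of $R$-modules on $U$ and every open $W \subseteq U$, there is a canonical equivalence $\operatorname{RHom}_{\mathrm{Sh}_R(U)}(R[W], L) \simeq L(W)$.

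Granting these, the proposition follows formally. Applying $\operatorname{RHom}_{\mathrm{Sh}_R(U)}(-, K)$ to the equivalence $R[U] \simeq \hocolim_{\Delta^{\op}} R[V_\bullet]$, and using that contravariant $\operatorname{RHom}$ sends homotopy colimits to homotopy limits, together with the coproduct identity $\operatorname{RHom}(\bigoplus_\alpha A_\alpha, -) \simeq \prod_\alpha \operatorname{RHom}(A_\alpha, -)$ and Fact 2 (so that $\operatorname{RHom}_{\mathrm{Sh}_R(U)}(R[V_n], K) \simeq \prod_\alpha K(V_{n,\alpha}) = K(V_n)$), one obtains
\[
 K(U) \;\simeq\; \operatorname{RHom}_{\mathrm{Sh}_R(U)}(R[U], K) \;\simeq\; \holim_{\Delta} \operatorname{RHom}_{\mathrm{Sh}_R(U)}(R[V_\bullet], K) \;\simeq\; \holim_{\Delta} K(V_\bullet),
\]
and chasing through the identifications shows the composite is the natural map $K(U) \to \holim_{\Delta} K(V_\bullet)$. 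As $V_\bullet \to U$ was an arbitrary open hypercover, $K$ is a hypersheaf.

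To prove Fact 1, one observes that the defining condition of a hypercover is exactly this exactness statement in disguise: on the stalk at a point $x \in U$, the augmented complex becomes the augmented chain complex of the simplicial set obtained by restricting $V_\bullet$ along $x$, and this simplicial set, being a hypercover of a point, is a trivial Kan fibration over the point, hence contractible, hence acyclic. (For an ordinary open cover, rather than a general hypercover, this is the familiar contractibility of the sheafy \v{C}ech resolution, and the reader uneasy with hypercovers may restrict to that case throughout.) The equivalence with the statement about $\hocolim_{\Delta^{\op}}$ is the standard fact that the homotopy colimit over $\Delta^{\op}$ of a simplicial object in an abelian category is computed by its associated normalized chain complex. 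To prove Fact 2, note that the open immersion $j_W \colon W \hookrightarrow U$ gives an adjunction $j_{W!} \dashv j_W^*$ in which both functors are exact, so it derives without any correction and yields $\operatorname{RHom}_{\mathrm{Sh}_R(U)}(R[W], L) \simeq \operatorname{RHom}_{\mathrm{Sh}_R(W)}(\underline R, L|_W) = R\Gamma(W, L|_W)$; since $L|_W$ is again a bounded-below complex of flabby sheaves it is acyclic for $\Gamma(W, -)$ — the hypercohomology spectral sequence degenerates because flabby sheaves have vanishing higher cohomology and $L$ is bounded below — so $R\Gamma(W, L|_W) = \Gamma(W, L|_W) = L(W)$.

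I expect the only genuinely nontrivial input to be Fact 1, the homological characterization of hypercovers; its proof for general hypercovers requires the stalkwise reduction to contractibility of hypercovers of a point, but this is entirely standard and uses nothing about flabbiness. Everything else is formal: the derived $j_{W!} \dashv j_W^*$ adjunction, the classical fact that bounded-below complexes of flabby sheaves are $\Gamma$-acyclic — which is proved without reference to the notion of hypersheaf, so that the argument is not circular — and the fact that a contravariant $\operatorname{RHom}$ interchanges homotopy colimits with homotopy limits. The hypothesis that $K$ is bounded below enters only through this $\Gamma$-acyclicity, and cannot be omitted: an unbounded complex of flabby sheaves is in general neither $\Gamma$-acyclic nor a hypersheaf.
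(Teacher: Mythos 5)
Your proof is correct, but it reaches the conclusion by a route dual to the paper's. Both arguments ultimately rest on the same two inputs: that the stalk of an open hypercover at a point is a contractible Kan complex (your Fact 1 is checked on stalks exactly as the paper checks its claim (1)), and the classical fact that a bounded below complex of flabby sheaves is $\Gamma(W,-)$-acyclic for every open $W$ (your Fact 2, the paper's concluding step). The difference is in how $\holim K(V_\bullet)$ is brought into contact with $K(U)$. The paper internalizes the descent datum as a complex of sheaves: it forms $\check C_\epsilon(K)$ as the totalization of the cosimplicial pushforwards of $K|_{V_n}$, and must then verify as a separate step that this complex is again bounded below and flabby (using that flabbiness survives products, restriction and pushforward). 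You instead externalize everything into the derived category: you resolve $R[U]$ by the simplicial object $\bigoplus_\alpha j_{V_{n,\alpha}!}\underline R$ and apply $\operatorname{RHom}(-,K)$, converting the homotopy colimit into the desired homotopy limit. What your approach buys is that the flabbiness bookkeeping for the \v{C}ech complex disappears, replaced by the exactness of the adjunction $j_{W!} \dashv j_W^*$, so flabbiness is only ever invoked for the restrictions $K|_W$ themselves; what it costs is the ambient machinery of $\operatorname{RHom}$ in the derived category of sheaves (K-injective resolutions, and the preservation of homotopy (co)limits by a contravariant $\operatorname{RHom}$), where the paper only needs totalizations of cosimplicial cochain complexes. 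One point worth making explicit in your writeup: the identification $\operatorname{RHom}(R[V_n],K)\simeq K(V_n)$ must be made naturally in $[n]$ --- for instance by fixing a single K-injective resolution $K\to I$ on $U$ and observing that $K(V_\bullet)\to \operatorname{Hom}(R[V_\bullet],I)=I(V_\bullet)$ is a levelwise quasi-isomorphism of cosimplicial complexes --- so that the equivalence of homotopy limits and its compatibility with the natural map $K(U)\to\holim K(V_\bullet)$ come for free.
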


\begin{proof}Since flabbiness is preserved when restricting to an open subset, it is enough to verify the hypersheaf axiom for an open hypercover $\epsilon\colon V_\bullet \to X$. For every such hypercover there is a complex of sheaves $\check{C}_\epsilon(K)$ on $X$ such that $\holim K(V_\bullet) = \Gamma(X,\check{C}_\epsilon(K))$, with a map $K \to \check C_\epsilon(K)$ inducing the map $K(X) \to \holim K(V_\bullet)$. Explicitly, $\check C_\epsilon(K)$ is the homotopy limit of the cosimplicial object which in degree $n$ is given by $K$ restricted to $V_n$, pushed forward to $X$. We note in passing that the homotopy limit may be computed as the totalization since every cosimplicial chain complex is Reedy fibrant. We claim now that:
\begin{enumerate}
\item $K \to \check C_\epsilon(K)$ is a quasi-isomorphism,
\item $\check C_\epsilon(K)$ is a bounded below complex of flabby sheaves,
\end{enumerate}
from which the result follows, since a quasi-isomorphism between bounded below complexes of flabby sheaves induces a quasi-isomorphism between global sections (this is what makes traditional sheaf cohomology well defined!). For the first point it is enough to check on stalks, which reduces the claim to the case that $X$ is a point. But a hypercover of a point is a contractible Kan complex and in particular its cohomology is trivial. The second point is straightforward, using that flabbiness of a sheaf is preserved by taking products, restricting to an open subset, and pushing forward. \end{proof}

\begin{rem}An identical proof shows that unbounded complexes which are $K$-flabby in the sense of Spaltenstein \cite{spaltenstein} are hypersheaves. Hence for unbounded complexes of sheaves the notion of cohomology defined here coincides with Spaltenstein's. \end{rem}

\section{The proof}

\begin{prop}\label{hypersheaf}The presheaf $\Sing$ is a hypersheaf, for an arbitrary topological space $X$. 
\end{prop}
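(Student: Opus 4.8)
The plan is to reduce the statement, via the evident duality between singular chains and singular cochains, to a single classical input --- the theorem of small simplices --- everything else being formal. Write $C^{\sing}_\bullet(-)$ for the presheaf of singular chain complexes on $X$; it takes values in complexes of free abelian groups concentrated in non-negative degrees, and $\Sing(-)=\mathrm{Hom}_\Z(C^{\sing}_\bullet(-),A)$ by definition of singular cochains. Since $\Sing$ is uniformly bounded below, being a hypersheaf is equivalent to being an $\infty$-sheaf (footnote to \cref{sheafdef}), so it suffices to verify \v{C}ech descent: for every open $U\subseteq X$ and every open cover $\mathfrak U=\{U_i\}_i$ of $U$, the canonical map $\Sing(U)\to\Tot\,\check C(\mathfrak U,\Sing)$ is a quasi-isomorphism, where $\check C(\mathfrak U,\Sing)$ is the \v{C}ech double complex of the remark following \cref{sheafdef} and $\Tot$ denotes its product totalization. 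Now $C^{\sing}_\bullet$ carries a disjoint union of opens to the direct sum of the corresponding complexes and $\mathrm{Hom}_\Z(-,A)$ carries direct sums to products, so $\check C(\mathfrak U,\Sing)$ is obtained by applying $\mathrm{Hom}_\Z(-,A)$ termwise to the bicomplex $D_{\bullet\bullet}$ whose term in simplicial degree $n$ is $\bigoplus_{(i_0,\dots,i_n)}C^{\sing}_\bullet(U_{i_0}\cap\dots\cap U_{i_n})$; under this identification $\Tot\,\check C(\mathfrak U,\Sing)=\mathrm{Hom}_\Z(\Tot^{\oplus}D_{\bullet\bullet},A)$, and the canonical map above is $\mathrm{Hom}_\Z(-,A)$ applied to the augmentation $\epsilon\colon\Tot^{\oplus}D_{\bullet\bullet}\to C^{\sing}_\bullet(U)$ induced, with the usual signs, by the inclusions $U_{i_0}\cap\dots\cap U_{i_n}\hookrightarrow U$.

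It therefore suffices to prove (a) that $\epsilon$ is a quasi-isomorphism and (b) that $\mathrm{Hom}_\Z(-,A)$ preserves it. Point (b) is soft: $\Tot^{\oplus}D_{\bullet\bullet}$ and $C^{\sing}_\bullet(U)$ are complexes of free abelian groups in non-negative degrees, so the quasi-isomorphism $\epsilon$ is in fact a chain homotopy equivalence, and chain homotopy equivalences are preserved by every additive functor. Point (a) is the classical assertion that the \v{C}ech (Mayer--Vietoris) double complex of singular chains attached to an open cover computes singular homology; I would prove it by factoring $\epsilon$ through the subcomplex $C^{\mathfrak U}_\bullet(U)\subseteq C^{\sing}_\bullet(U)$ of $\mathfrak U$-small singular chains. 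That inclusion is a chain homotopy equivalence by the theorem of small simplices (\S\ref{proofsketch}; barycentric subdivision), and the residual map $\Tot^{\oplus}D_{\bullet\bullet}\to C^{\mathfrak U}_\bullet(U)$ is a quasi-isomorphism by a routine spectral-sequence argument: computing homology first in the \v{C}ech direction, one finds attached to each $\mathfrak U$-small singular simplex $\sigma$ the homology of the full simplex on the non-empty index set $\{i:\operatorname{im}\sigma\subseteq U_i\}$, which is contractible, so the spectral sequence degenerates onto $C^{\mathfrak U}_\bullet(U)$.

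Thus the only genuinely non-formal ingredient is the theorem of small simplices, exactly as in the classical argument of \S\ref{proofsketch}, and this honest barycentric-subdivision step is, I expect, the only place where anything must be checked by hand; the steps (ii)--(iv) of \S\ref{outline}, together with their paracompactness hypotheses, are absorbed into the purely formal passage through $\mathrm{Hom}_\Z(-,A)$. If one prefers to avoid the reduction to \v{C}ech covers altogether, one can argue directly with an arbitrary open hypercover $V_\bullet\to U$, combining the left Quillen functor $N\Z[-]\colon\mathrm{sSet}\to\mathrm{Ch}_{\ge0}$ --- which preserves homotopy colimits, $C^{\sing}_\bullet(-)$ being, up to natural chain homotopy equivalence, $N\Z[\operatorname{Sing}_\bullet(-)]$ --- with the theorem of Dugger and Isaksen that the singular simplicial set functor sends open hypercovers to homotopy colimit diagrams, and then dualizing through $\mathrm{Hom}_\Z(-,A)$ as before. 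Either way the obstacle is conceptual rather than technical: once one sees what the argument rests on, there is, as promised in the introduction, no cleverness involved.
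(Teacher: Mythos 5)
Your proof is correct, but it takes a genuinely different route from the paper. The paper's proof is a two-line conceptual argument: by Dugger--Isaksen \cite{duggerisaksen}, $\hocolim V_\bullet \to U$ is a weak homotopy equivalence for \emph{any} open hypercover, and since $C^{-\bullet}_\sing(-,A)$ is the mapping spectrum $F((-)_+,HA)$, it converts this homotopy colimit into the required homotopy limit. You instead first reduce hypercover descent to \v{C}ech descent using the boundedness of $\Sing$ (the assertion, quoted from the footnote to \cref{sheafdef}, that bounded-below $\infty$-sheaves of cochain complexes are automatically hypersheaves --- itself a nontrivial input from higher topos theory, namely that truncated objects are hypercomplete \cite{highertopostheory}), and then verify \v{C}ech descent by hand: dualize along $\mathrm{Hom}_{\Z}(-,A)$ to the Mayer--Vietoris double complex of singular chains, prove the augmentation is a quasi-isomorphism via the theorem of small simplices \cite{spanier} plus a row-exactness/spectral sequence computation over the contractible index simplices, and transport the conclusion back through $\mathrm{Hom}_{\Z}(-,A)$ using that a quasi-isomorphism of bounded-below complexes of free abelian groups is a chain homotopy equivalence. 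All of these steps check out, including the freeness argument in (b) and the identification of $\Tot^{\Pi}$ of the dual with $\mathrm{Hom}_\Z$ of $\Tot^{\oplus}$. What your route buys is a proof that a classical algebraic topologist can verify line by line without spectra or $HA$-modules, and it isolates the point-set content in barycentric subdivision rather than in the (harder, more recent) Dugger--Isaksen theorem; it also makes transparent how steps (iii)--(iv) of \S\ref{outline} dissolve into pure homological algebra. What it costs is the restriction to \v{C}ech covers, so that the full hypersheaf statement still leans on a hypercompletion argument; your closing sketch of the hypercover case reconverges with the paper's proof, since there you again need \cite{duggerisaksen}. One small caveat on that sketch: to dualize a homotopy colimit of chain complexes into a homotopy limit of cochain complexes you should note that all complexes in sight are levelwise free, so $\mathrm{Hom}_\Z(-,A)$ agrees with its derived functor; as written this is implicit.
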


\begin{proof}
Let $V_\bullet \to U$ be an open hypercover. We need to prove that the induced map
$$ C^\bullet_\sing(U,A) \longrightarrow \holim C^\bullet_\sing(V_\bullet,A) $$
is a quasi-isomorphism. According to a result of Dugger--Isaksen \cite[Theorem 1.2]{duggerisaksen}, the map $\hocolim V_\bullet \to U$ is a weak homotopy equivalence, and so we need to show that 
$$ C^\bullet_\sing(\hocolim V_\bullet,A) \longrightarrow \holim C^\bullet_\sing(V_\bullet,A) $$
is a quasi-isomorphism.

In classical category theory it is well known that representable functors take colimits to limits. The analogous statement in model category theory is that the `function complex' $\mathbb R \mathrm{map}(-,X)$ takes homotopy colimits to homotopy limits. And the functor of singular cochains can be described as such a function complex: if we denote by $HA$ the Eilenberg-MacLane spectrum of the ring $A$, then under the equivalence between chain complexes over $A$ and $HA$-module spectra, $C^{-\bullet}_\sing(U,A)$ is equivalent to the function spectrum $F(U_+,HA)$. See \cite[Chapter IV]{ekmm} and \cite{shipley}.  The result follows. \end{proof}

\begin{rem}The last equivalence in the preceding proof is a cochain-level version of the perhaps more familiar statement that cohomology is representable: $H^n_\sing(U,A)  \cong [U_+,K(A,n)]$, where $[-,-]$ denotes weak homotopy classes of based maps.\end{rem}

%\begin{rem}In the previous proof it is important that $[X,Y]$ is interpreted model-categorically: as homotopy classes of maps from a cofibrant replacement of $X$ to a fibrant replacement of $Y$. In particular, $[X,K(A,n)]$ should be understood as homotopy classes of maps from a CW approximation of $X$ to $K(A,n)$. Only with this interpretation of $[-,-]$ is it true that representable functors take homotopy colimits to homotopy limits; also, it is not true without point-set hypotheses on $X$ that $H^n_\sing(X,A)$ coincides with homotopy classes of maps $X \to K(A,n)$. \end{rem}

\begin{thm}\label{mainthm}Let $X$ be a topological space, cohomologically locally connected with respect to $A$. There is a canonical isomorphism $H^\bullet_\sheaf(X,A) \cong H^\bullet_\sing(X,A)$. 
\end{thm}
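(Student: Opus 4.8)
The plan is to realize $\Sing$ as a fibrant replacement of the constant presheaf $A$ in the local projective model structure, so that sheaf cohomology gets computed directly by the singular cochain complex; in the language of \S\ref{outline} this is the shortcut in which steps (ii)--(iv) are bypassed because $\Sing$ is already a hypersheaf. Concretely, I would consider the canonical map of presheaves of cochain complexes $\mathrm{const}(A) \to \Sing$ --- from the constant presheaf with value $A$ placed in degree $0$, sending $a$ to the constant $0$-cochain with value $a$ --- and reduce the theorem to two assertions: (a) this map is a weak equivalence in the model structure of \cref{model structure}, and (b) $\Sing$ is fibrant there.

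Assertion (b) is immediate from the work already done: by \S\ref{def-cohomology} the fibrant objects are exactly the hypersheaves, and $\Sing$ is a hypersheaf by \cref{hypersheaf}. For (a) I would compute stalks. The stalk of $\mathrm{const}(A)$ at a point $x$ is $A$ concentrated in degree $0$; the stalk of $\Sing$ at $x$ is $\varinjlim_{U \ni x} C^\bullet_\sing(U,A)$, whose cohomology --- filtered colimits of cochain complexes being exact --- is $\varinjlim_{U \ni x} H^\bullet_\sing(U,A)$. Taking the filtered colimit of the long exact sequences of the pairs $(U,x)$ and using that $H^\bullet_\sing(x;A)$ is $A$ concentrated in degree $0$, the hypothesis $\varinjlim_{U\ni x}H^k_\sing(U,x;A)=0$ for all $k$ is precisely the assertion that $\varinjlim_{U\ni x}H^0_\sing(U,A)=A$ and $\varinjlim_{U\ni x}H^k_\sing(U,A)=0$ for $k\neq 0$, i.e.\ that $\mathrm{const}(A)\to\Sing$ is a quasi-isomorphism on every stalk; by \cref{model structure} that is exactly what it means to be a weak equivalence.

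Granting (a) and (b), the map $\mathrm{const}(A)\to\Sing$ is a fibrant replacement of $\mathrm{const}(A)$, and since $R\Gamma$ may be computed from any fibrant replacement --- Ken Brown's lemma, as recalled in \S\ref{def-cohomology}, with no cofibrancy required on the source --- we obtain $R\Gamma(X,A)=\Gamma(X,\Sing)=C^\bullet_\sing(X,A)$ literally; passing to cohomology gives $H^\bullet_\sheaf(X,A)\cong H^\bullet_\sing(X,A)$. One small point to record along the way: $H^\bullet_\sheaf(X,A)$ refers to the cohomology of the constant \emph{sheaf} $\underline A$, but the sheafification map $\mathrm{const}(A)\to\underline A$ is a stalkwise isomorphism, hence a weak equivalence, so it computes the same $R\Gamma$. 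The resulting isomorphism is canonical, being induced by the single natural map $\mathrm{const}(A)\to\Sing$, and in particular natural in $X$; and when $A$ is a commutative ring the same argument run with the cup products --- which make $\Sing$ a presheaf of $E_\infty$-algebras and which the hypersheaf condition does not see --- yields the refined equivalence $R\Gamma(X,A)\simeq C^\bullet_\sing(X,A)$ of $E_\infty$-algebras.

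As for where the difficulty lies: essentially all of the content has already been discharged in \cref{model structure} and \cref{hypersheaf} (with \cref{hypersheaf0} ensuring that the $R\Gamma$ computed here agrees with classical sheaf cohomology), so what remains is bookkeeping. The one step that genuinely requires care is the stalk computation in (a) --- making sure the homological meaning of `cohomologically locally connected' is matched correctly against the stalk of $\Sing$ --- but this is elementary once the long exact sequence of a pair is brought in.
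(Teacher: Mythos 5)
Your proposal is correct and follows essentially the same route as the paper: identify $A \to \Sing$ as a stalkwise weak equivalence (via the cohomological local connectedness hypothesis) to a fibrant object (via \cref{hypersheaf}), hence a fibrant replacement computing $R\Gamma$. The extra details you supply --- the long exact sequence of the pair in the stalk computation, and the observation that the constant presheaf and constant sheaf have the same derived global sections --- are correct elaborations of steps the paper leaves implicit.
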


\begin{proof}Consider the constant presheaf $A$ on $X$, and the map 
$ A \to \Sing.$
Recall that $ A \to \Sing$ is a weak equivalence precisely if it induces a quasi-isomorphism on stalks (\cref{model structure}). Since the stalk of $\Sing$ at $x$ is the cochain complex $\varinjlim_{U \ni x}C^\bullet_\sing(U,A)$, we see that $A \to \Sing$ is a weak equivalence if and only if $X$ is cohomologically locally connected. 
Moreover, by \cref{hypersheaf} we know that $\Sing$ is a hypersheaf, hence a fibrant replacement of $A$ if $X$ is cohomologically locally connected. So in this case we can compute the sheaf cohomology of $X$ as the global sections of $\Sing$, which is nothing but the singular cochain complex of $X$ with coefficients in $A$. \end{proof}

\begin{rem}A reader may wonder where the point-set topology happened in this proof. Surely one can not prove a theorem in general topology merely by mumbling about hypersheaves. The answer is that the result of Dugger--Isaksen used in \cref{hypersheaf} is a genuinely nontrivial point-set theorem: indeed, their result says that if $V_\bullet \to U$ is \emph{any} open hypercover of an \emph{arbitrary} topological space $U$, then $\hocolim V_\bullet$ is weakly homotopy equivalent to $U$. The conclusion of their result says that the homotopy colimit of a certain diagram coincides with its usual colimit, which we usually only expect for suitably \emph{cofibrant} diagrams, which is far from the case here.\end{rem}

\begin{rem}The theory of sheaf cohomology has traditionally been approached using the tools of homological algebra and derived categories. The homotopy-theoretic approach to sheaf theory used here was developed to treat situations where the methods of homological algebra break down, such as sheaves of simplicial sets or spectra. A full history of the subject would be out of place here but contributions of \cite{sga4,brown,browngersten,thomason,jardine0,duggerhollanderisaksen,highertopostheory} %Grothendieck, Verdier, Illusie, Brown, Brown--Gersten, Thomason, Jardine, Dugger--Hollander--Isaksen and Lurie 
should be mentioned. This note illustrates that these methods can be useful and clarifying also in very classical and linear situations. 
\end{rem} 

\section{$E_\infty$-structure on cochains}

\begin{para}Suppose now that $A$ is not just an abelian group but a commutative ring. The argument can be jazzed up to prove that $\mathrm R\Gamma(X,A)$ and $C^\bullet_\sing(X,A)$ are quasi-isomorphic as \emph{$E_\infty$-algebras} in $\mathrm{Ch}_A$. In fact they are canonically isomorphic in the associated $\infty$-category. One way to prove this is to observe that the weak equivalence $A \to \Sing$ considered in the proof of \cref{mainthm} is in fact a morphism of \emph{presheaves of $E_\infty$-algebras} in cochain complexes. It would then be enough to argue that $\Sing$ is fibrant for a suitable model structure on presheaves of $E_\infty$-algebras on $X$, and that the adjunction of \S\ref{def-cohomology} is a \emph{symmetric monoidal} Quillen adjunction. This is indeed possible, but since not all of the details are in an easily quotable form in the literature, we will give a slightly different argument.\end{para}

\begin{para}Recall that the totalization functor from cosimplicial cochain complexes to cochain complexes is monoid\-al, but not symmetric monoidal. It is however symmetric monoidal in a homotopical sense: it induces a symmetric monoidal functor between the associated $\infty$-categories. A more classical way of expressing this homotopy symmetricity property uses the \emph{Eilenberg--Zilber operad} $\mathsf{Z}$ of Hinich--Schechtman \cite{hinichschechtman}, which has the following properties:
\begin{enumerate}[(i)]
\item If $\mathsf O$ is an operad in cochain complexes, then the totalization of a cosimplicial $\mathsf O$-algebra is an $\mathsf O \otimes \mathsf{Z}$-algebra.
\item There is a canonical quasi-isomorphism of operads $\mathsf{Z} \to \mathsf{Com}$. 
\end{enumerate}
We remark first of all that these facts explain why $\Sing$ is a presheaves of $E_\infty$-algebras. Indeed, the singular cochains of any space are manifestly obtained as the totalization of a cosimplical commutative ring, hence form a $\mathsf Z$-algebra by (i), and by (ii), any $\mathsf Z$-algebra may be considered as an $E_\infty$-algebra. 

We will compare $\Sing$ with the \emph{Godement resolution}, and its natural $E_\infty$-algebra structure. The Godement resolution comes with a natural weak equivalence $F \stackrel \sim \longrightarrow \mathrm{Gode}(F)$. The functor $F \mapsto \mathrm{Gode}(F)$ is the composition of a symmetric monoidal functor from presheaves on $X$ to cosimplicial presheaves on $X$, and the functor of totalization, which in particular means that the Godement construction applied to a presheaf of $\mathsf O$-algebras is a presheaf of $\mathsf O \otimes \mathsf{Z}$-algebras.

But this means now that we may start from the local weak equivalence $A \to \Sing$ of presheaves of $\mathsf Z$-algebras, and apply the Godement functor to obtain a commuting square 
	\[	\begin{tikzcd}
		A \arrow[r,"\sim"]\arrow[d,"\sim"]& \mathrm{Gode}(A) \arrow[d,"\sim"] \\
		\Sing \arrow[r,"\sim"]& \mathrm{Gode}(\Sing)	\end{tikzcd} \]
of local weak equivalences of presheaves of $\mathsf Z \otimes \mathsf Z$-algebras. All entries in this diagram except the top left corner are hypersheaves. It follows that taking global sections gives a zig-zag of quasi-isomorphisms of $\mathsf Z \otimes \mathsf Z$-algebras, hence of $E_\infty$-algebras,
$$ C^\bullet_{\mathrm{sing}}(X,A) = \Gamma(X,\Sing) \stackrel\sim\longrightarrow \Gamma(X,\mathrm{Gode}(\Sing))\stackrel\sim\longleftarrow \Gamma(X,\mathrm{Gode}(A)) = \mathrm R\Gamma(X, A), $$
which gives the result. \end{para}

\printbibliography
%\bibliographystyle{alpha}
%\bibliography{../database}

\end{document}